\tikzset{%
element/.style={draw, shape=circle, fill=white, inner sep=1.4pt}
}
\DeclareSymbolFont{bbold}{U}{bbold}{m}{n}
\DeclareSymbolFontAlphabet{\mathbbold}{bbold}
\theoremstyle{plain}
\newtheorem{thm}{Theorem}[section]
\newtheorem{lem}[thm]{Lemma}
\newtheorem{cor}[thm]{Corollary}
\newtheorem{pro}[thm]{Proposition}
\newtheorem{problem}[thm]{Problem}
\theoremstyle{definition}
\newtheorem{remark}[thm]{Remark}
\begin{document}

\title[The additively idempotent semiring $S_7^0$]
{The additively idempotent semiring $S_7^0$ is nonfinitely based}

\author{Yanan Wu}
\address{School of Mathematics, Northwest University, Xi'an, 710127, Shaanxi, P.R. China}
\email{wuyanan@stumail.nwu.edu.cn}
\author{Miaomiao Ren}
\address{School of Mathematics, Northwest University, Xi'an, 710127, Shaanxi, P.R. China}
\email{miaomiaoren@yeah.net}
\author{Xianzhong Zhao}
\address{School of Mathematics, Northwest University, Xi'an, 710127, Shaanxi, P.R. China}
\email{zhaoxz@nwu.edu.cn}

\subjclass[2010]{16Y60, 03C05, 08B05}
\keywords{semiring, variety, nonfinitely based}
\thanks{Miaomiao Ren, corresponding author, is supported by National Natural Science Foundation of China (11701449)
and Natural Science Foundation of Shaanxi Province (2022JM-009).
Xianzhong Zhao is supported by National Natural Science Foundation of China (11971383).
}

\begin{abstract}
We show that the additively idempotent semiring $S_7^0$ has no finite basis for its equational theory.
This answers an open problem posed by Jackson et al. (J. Algebra 611 (2022), 211--245).
\end{abstract}

\maketitle

\section{Introduction and preliminaries}
An \emph{semiring} is an algebra $(S, +, \cdot)$ with two binary operations $+$ and $\cdot$
such that the additive reduct $(S, +)$ is a commutative semigroup,
the multiplicative reduct $(S, \cdot)$ is a semigroup and $(S, +, \cdot)$ satisfies the distributive laws
\[
(x+y)z\approx xy+xz, x(y+z)\approx xy+xz.
\]
An \emph{additively idempotent semiring} (ai-semiring for short) is a semiring such that its additive reduct is a semilattice,
that is, a commutative idenpotent semigroup. Such an algebra is also called a \emph{semilattice-ordered semigroup}.
This family of semirings includes many famous algebras:
the Kleene semiring of regular languages \cite{con}, tropical semirings \cite{pin98}, syntactic semirings of languages \cite{lp03}
and endomorphism semirings of semilattices \cite{dol09a}.
These and other similar algebras have broad applications in algebraic geometry, tropical geometry, idempotent analysis, information science and
theoretical computer science (see \cite{cc, gl, go, km, ms}).

A variety is \emph{finitely based} if it can be defined by a finite number of identities.
Otherwise, it is \emph{nonfinitely based}. An algebra $A$ is finitely based (resp. nonfinitely based)
if the variety $\mathsf{V}(A)$ generated by $A$ is finitely based (resp. nonfinitely based).
The finite basis problem is one of the most
important problems in the theory of varieties. Being very natural by itself, this problem also
has many interesting and unexpected connections with formal languages \cite{al95} and
classical number-theoretic conjectures \cite{per89}.

In the past two decades, the theory of ai-semiring varieties has been intensively studied and well developed.
Dolinka \cite{dol07, dol09} found the first example of a nonfinitely based ai-semiring variety and
gave a sufficient condition under which an ai-semiring variety is inherently nonfinitely based.
Pastijn et al. \cite{gpz05, pas05} classified the ai-semiring varieties satisfying $x^2\approx x$ and showed
that there are 78 such varieties.
Ren et al. \cite{rzw, rz16} proved that there are 179 ai-semiring varieties satisfying $x^3\approx x$.
Ren et al. \cite{rzs20} showed that if $n-1$ is square-free, then there are $2+2^{r+1}+3^r$ ai-semiring varieties
satisfying $x^n\approx x$ and $xy\approx yx$, where $r$ denotes the number of prime divisors of $n-1$.
Gusev and Volkov \cite{gv22a, gv22b} considered the finite basis problem for some ai-semirings,
which include the power semirings of finite groups and the
finite ai-semirings whose multiplicative reducts are inverse semigroups.
Recently, Ren et al. \cite{rjzl} provided an infinite series of minimal nonfinitely based ai-semiring varieties.

From \cite{gpz05, pas05, rzs20, rzw, rz16} one can observe that the following constructions
have played an important role in the theory of ai-semiring varieties.
Let $S$ be an ai-semiring. If we adjoin an extra element $0$ to the set $S$ and define
\[
(\forall a\in S\cup \{0\}) \quad a+0=a, a0=0a=0,
\]
then $S\cup \{0\}$ becomes an ai-semiring and is denoted by $S^0$.
In this semiring
we always have the implications
\begin{center}
$ab=0$ $\Rightarrow$ $a=0$ or $b=0$
\end{center}
and
\begin{center}
$a+b=0$ $\Rightarrow$ $a=b=0$.
\end{center}
It is easy to see that
$0$ is the identity element of $(S^0, +)$ and is the zero element of $(S^0, \cdot)$.

Recall that $S_7$ is a $3$-element ai-semiring $\{1, a, 0\}$ with the Cayley tables
\begin{center}
\begin{tabular}{c|ccc}
$+$&1&$a$&0\\
\hline
$1$&1&$0$&0\\
$a$&0&$a$&0\\
$0$&0&$0$&0\\
\end{tabular}\qquad
\begin{tabular}{c|ccc}
$\cdot$&1&$a$&0\\
\hline
$1$&1&$a$&0\\
$a$&$a$&$0$&0\\
$0$&0&$0$&0\\
\end{tabular}
.
\end{center}
To avoid ambiguity, we shall use $\infty$ to denote the multiplicative zero element of $S_7^0$.
So it has the following Cayley tables
\begin{center}
\begin{tabular}{c|cccc}
$+$&1&$a$&0&$\infty$\\
\hline
$1$&1&$0$&0&1\\
$a$&0&$a$&0&$a$\\
$0$&0&$0$&0&0\\
$\infty$&1&$a$&0&$\infty$\\
\end{tabular}\qquad
\begin{tabular}{c|cccc}
$\cdot$&1&$a$&0&$\infty$\\
\hline
$1$&1&$a$&0&$\infty$\\
$a$&$a$&$0$&0&$\infty$\\
$0$&0&$0$&0&$\infty$\\
$\infty$&$\infty$&$\infty$&$\infty$&$\infty$\\
\end{tabular}
.
\end{center}
Jackson et al. \cite{jrz} showed that $S_7$ has the following remarkable property.
Up to isomorphism, it is the only nonfinitely based ai-semiring of order at most three
Also, it can infect the nonfinitely based property to many other finite ai-semirngs.
This prompted Jackson et al. to propose the following interesting problems about $S_7^0$:
\begin{problem}\label{pro1}
\hspace*{\fill}
\begin{itemize}
\item[$(1)$] Is $S_7^0$ finitely based or nonfinitely based?

\item[$(2)$] What is the cardinality of the interval $[\mathsf{V}(S_7), \mathsf{V}(S_7^0)]$?
\end{itemize}
\end{problem}
In this paper we shall answer Problem \ref{pro1} (1) and show that $S_7^0$ is nonfinitely based.
For this purpose, the following notions and notations are necessary.
Let $X$ denote a countably infinite set of variables and $X^+$
the free semigroup on $X$. Then the ai-semiring
$(P_f(X^+), \cup, \cdot)$ consisting of all non-empty finite subsets of $X^+$
is free in the variety of all ai-semirings on $X$ (see \cite[Theorem 2.5]{kp}).
An \emph{ai-semiring identity} over $X$ is an
expression of the form
\[
u\approx v,
\]
where $u, v\in P_f(X^+)$.
For convenience, we always write
\[
u_1+\cdots+u_k\approx v_1+\cdots+v_\ell
\]
for the ai-semiring identity
\[
\{u_i \mid 1 \leq i \leq k\}\approx \{v_j \mid 1 \leq j \leq \ell\}.
\]
An \emph{ai-semiring substitution} is an endomorphism of $P_f(X^+)$.

Suppose that $\Sigma$ is a set of ai-semiring identities which include the identities that determine the variety of all ai-semirings.
Let $u\approx v$ be an ai-semiring identity such that
$u=u_1+\cdots+u_k$, $v=v_1+\cdots+v_\ell$, where $u_i$, $v_j\in X^+$, $1\leq i\leq k$, $1\leq j\leq \ell$.
Then it is easy to
see that the ai-semring variety defined by $u\approx v$ is equal to the ai-semring
variety defined by the simpler identities
$u\approx u+v_j, v\approx v+u_i, 1\leq i\leq k, 1\leq j\leq \ell$.
Therefore, to show that $u\approx v$ is derivable from $\Sigma$, we only need to show that
$u\approx u+v_j, v\approx v+u_i, 1\leq i\leq k, 1\leq j\leq \ell$ can be derived
from $\Sigma$. This is one of the most fundamental techniques in the theory
of ai-semiring varieties.

The following result about the equational logic of ai-semirings can be found in \cite[Lemma 3.1]{dol09}:
\begin{lem}\label{lem1}
Let $\Sigma$ be a set of ai-semiring identities and let $u\approx v$ be a nontrivial ai-semiring identity.
Then $u\approx v$ is derivable from $\Sigma$ if and only if
there exist $T_1, T_2,\ldots, T_n \in P_f(X^+)$
such that $u=T_1$, $v=T_n$ and, for every $i=1, 2,\ldots, n-1$, there are $A_i, B_i, P_i, Q_i, R_i\in P_f(X^+)$
and an ai-semiring substitution $\varphi_i: P_f(X^+) \to P_f(X^+)$ such that
\[
T_i=P_i\varphi_i(A_i)Q_i+R_i,~T_{i+1}=P_i\varphi_i(B_i)Q_i+R_i,
\]
where $A_i\approx B_i\in \Sigma$ or $B_i\approx A_i\in \Sigma$, $P_i$ and $Q_i$ may be the set $\{1\}$, $R_i$ may be the empty set.
\end{lem}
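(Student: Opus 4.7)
The plan is to prove the two implications of Lemma \ref{lem1} separately. For the \emph{if} direction, I would simply verify that each individual step $T_i \to T_{i+1}$ described in the statement is a one-step consequence of $\Sigma$ via Birkhoff's rules: the substitution $\varphi_i$ applied to the axiom $A_i \approx B_i$ gives $\varphi_i(A_i) \approx \varphi_i(B_i)$; left-multiplication by $P_i$, right-multiplication by $Q_i$, and the addition of $R_i$ each preserve derivability; and transitivity then chains the $T_i$ together to yield $u \approx v$.

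For the \emph{only if} direction, the strategy is to identify derivability from $\Sigma$ with membership in the fully invariant congruence $\theta_\Sigma$ on the free ai-semiring $P_f(X^+)$ generated by the set of pairs $\{(\varphi(A),\varphi(B)) \mid A\approx B \in \Sigma, \ \varphi \text{ an ai-semiring substitution}\}$. Since $P_f(X^+)$ is freely generated in the variety of ai-semirings, $u \approx v$ is derivable from $\Sigma$ iff $(u,v) \in \theta_\Sigma$. I would then describe $\theta_\Sigma$ explicitly as the transitive--symmetric closure of the one-step ``in-context replacement'' relation, in which a single step replaces a distinguished occurrence of $\varphi(A)$ by $\varphi(B)$ inside a larger term. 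The task reduces to a normal-form lemma: in the free ai-semiring every context $C[\cdot]$ is equivalent, modulo the ai-semiring axioms, to one of the form $P \cdot (\cdot) \cdot Q + R$ with $P, Q \in X^+ \cup \{1\}$ and $R$ either in $P_f(X^+)$ or empty. This reduction is proved by induction on term structure, using distributivity to push $+$ above $\cdot$ and commutativity/associativity of $+$ to isolate the summand containing the marked slot.

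The main obstacle is the bookkeeping around the fact that each $\varphi_i(A_i)$ is itself a sum of words, so a single rewrite has to replace the whole sum simultaneously rather than replacing its summands one at a time. Two points need to be checked carefully. First, in the normal-form reduction the distinguished slot must be allowed to hold an entire element of $P_f(X^+)$, not merely a single monomial; this is natural once one works with $P_f(X^+)$-valued substitutions rather than word-valued ones. Second, moves that merely rearrange a term using the ai-semiring axioms themselves (associativity, commutativity, idempotence of $+$, distributivity) must also be expressible in the format $P\varphi(A)Q + R$; this is routine once one takes $A \approx B$ to be one of those defining axioms, which by hypothesis is included in $\Sigma$. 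Combining the normal-form reduction with the Birkhoff-style description of fully invariant congruences yields the required chain $T_1 = u, T_2, \ldots, T_n = v$, completing the argument.
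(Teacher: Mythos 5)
The paper does not prove this lemma; it is quoted verbatim from Dolinka \cite[Lemma 3.1]{dol09}, so there is no in-paper argument to compare against. Your outline is the standard proof and is essentially correct: the ``if'' direction is Birkhoff soundness of each one-step replacement, and the ``only if'' direction is the Mal'cev-style description of the (fully invariant) congruence generated by the substitution instances of $\Sigma$, combined with the observation that every one-hole context collapses, via distributivity and the semilattice laws, to a translation $\xi\mapsto P\xi Q+R$. One small correction: in your normal-form lemma you restrict $P,Q$ to $X^+\cup\{1\}$, but the inductive step for $C=S\cdot C'[\xi]$ with $S$ a sum of words produces $SP$, which is in general a genuine element of $P_f(X^+)$ rather than a single word (e.g.\ the context $(x+y)\xi$); the statement of Lemma \ref{lem1} already allows $P_i,Q_i\in P_f(X^+)$, so you should carry out the induction with that weaker normal form, after which everything goes through. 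Your remark that purely structural rearrangements need their own steps is actually unnecessary here: since all terms are taken as elements of the free ai-semiring $P_f(X^+)$, identities of the base variety are literal equalities and consume no link in the chain.
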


Let $S$ be a trivial ai-semiring, that is, $|S|=1$.
Then $S^0$ is a 2-element distributive lattice and is denoted by $D_2$.
Let $\omega$ be a word. Then $c(\omega)$ denotes the set of variables that occur in $\omega$.
The solution of the equational problem for $D_2$
can be found in \cite[Lemma 1.1 (iv)]{sr}:

\begin{lem}\label{nlemma1}
Let
$u\approx u+q$ be an ai-semiring identity such that
$u=u_1+\cdots+u_n$, where $u_i, q\in X^+$, $1\leq i \leq n$. Then
$u\approx u+q$ holds in $D_2$ if and only if $c(u_i)\subseteq c(q)$ for some $u_i \in u$.
\end{lem}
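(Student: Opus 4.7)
The plan is to translate validity of the identity $u \approx u + q$ in $D_2$ into a combinatorial condition on the sets $c(u_i)$ and $c(q)$ by direct evaluation. First I would observe that, since $D_2 = S^0$ for the trivial ai-semiring $S = \{e\}$, its carrier is $\{e, 0\}$ with $+$ acting as join (so $0$ is the bottom) and $\cdot$ acting as meet (so $0$ is absorbing and $e\cdot e=e$); in other words $D_2$ is the two-element distributive lattice. For any assignment $\phi\colon X \to D_2$ and any word $w \in X^+$, $\phi(w) = e$ holds precisely when $\phi(x) = e$ for every $x \in c(w)$.

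Writing $Y_\phi := \phi^{-1}(e) \subseteq X$, this reads $\phi(w) = e \Longleftrightarrow c(w) \subseteq Y_\phi$. Applied to the sum $u = u_1 + \cdots + u_n$, we then have $\phi(u) = e$ iff $c(u_i) \subseteq Y_\phi$ for some $i$. Since assignments $\phi$ and subsets $Y \subseteq X$ are in bijection via $\phi \mapsto Y_\phi$, the identity $u \approx u + q$ holds in $D_2$ iff for every $Y \subseteq X$,
\[
c(q) \subseteq Y \;\Longrightarrow\; c(u_i) \subseteq Y \text{ for some } i.
\]

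Both directions of the lemma will then follow at once from this reformulation. For sufficiency, assuming $c(u_j) \subseteq c(q)$ for some $j$, the inclusion $c(q) \subseteq Y$ forces $c(u_j) \subseteq Y$. For necessity, I would specialise the condition above to the particular choice $Y = c(q)$, which produces an index $i$ with $c(u_i) \subseteq c(q)$. The whole argument is routine; there is no genuine obstacle beyond carefully identifying $D_2$ as the two-element distributive lattice so that the point-wise evaluation of positive semiring terms has its standard meaning. The hypothesis $u_i, q \in X^+$ keeps everything well defined by excluding the empty word.
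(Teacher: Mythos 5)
Your argument is correct. Note that the paper itself gives no proof of this lemma --- it is quoted from \cite[Lemma 1.1 (iv)]{sr} --- so there is nothing internal to compare against; your direct verification is the standard one and is a perfectly good self-contained substitute. The key reductions are all sound: in $D_2=\{e,0\}$ a word $w\in X^+$ evaluates to $e$ exactly when every variable of $c(w)$ is sent to $e$, a sum evaluates to $e$ exactly when some summand does, and $\phi(u)=\phi(u)+\phi(q)$ in the two-element join-semilattice amounts to $\phi(q)=e\Rightarrow\phi(u)=e$ (you state the resulting condition on $Y=\phi^{-1}(e)$ without spelling out this last one-line equivalence, but it is immediate). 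Instantiating at $Y=c(q)$ for necessity and using monotonicity of inclusion for sufficiency then closes both directions.
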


The following result demonstrates the importance of $D_2$ in studying the variety of the form $\mathsf{V}(S^0)$.
\begin{pro}\label{nnnpro}
If $S$ is an ai-semiring, then $D_2$ is a member of $\mathsf{V}(S^0)$.
\end{pro}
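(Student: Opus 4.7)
The plan is to realize $D_2$ as a homomorphic image of $S^0$, since $\mathsf{V}(S^0)$ is closed under homomorphic images. I would define the equivalence relation $\theta$ on $S^0$ whose two classes are $S$ and $\{0\}$ (recall that $S$ is nonempty and $0 \notin S$, so this really is a partition into two nonempty blocks), and then verify that $\theta$ is a congruence of the ai-semiring $S^0$.

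For the congruence check, the relevant facts for addition are: $S$ is closed under $+$, $s + 0 = s \in S$ for every $s \in S$, and $0 + 0 = 0$; hence the sum of any two $\theta$-related pairs is again $\theta$-related. The check for multiplication is equally short: $S$ is closed under $\cdot$, and $s\cdot 0 = 0 \cdot s = 0$ for every $s \in S$ together with $0 \cdot 0 = 0$ shows that products respect the partition. Reading off the two induced Cayley tables on the quotient $S^0/\theta = \{[S],[0]\}$ reproduces exactly the tables of $D_2$, namely the $S^0$ construction applied to a trivial ai-semiring. Thus $D_2 \cong S^0/\theta \in \mathsf{V}(S^0)$.

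There is essentially no substantive obstacle. The whole content of the argument is the formal observation that the adjoined element $0$ is additively neutral and multiplicatively absorbing in $S^0$, which forces the bipartition separating $0$ from the rest to be a congruence. The only mild thing to guard against is the edge case $|S|=0$, which cannot occur because ai-semirings are, by convention, nonempty algebras, so the class $[S]$ is genuinely nonempty and the quotient really has two elements.
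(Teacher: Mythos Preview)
Your proposal is correct and follows essentially the same approach as the paper: both define the congruence on $S^0$ whose two blocks are $S$ and $\{0\}$, verify it is a semiring congruence, and identify the quotient with $D_2$. Your write-up in fact supplies more of the routine verification than the paper does.
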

\begin{proof}
Consider the equivalence relation $\rho$ on $S^0$ given by
\begin{center}
$(\forall a, b\in S^0) \quad (a, b)\in \rho\Leftrightarrow$
either $a, b\in S$ or $a=b=0$.
\end{center}
It is easily verified that $\rho$ is a semiring
congruence on $S^0$ and that the quotient semiring
$S^0/ \rho$ is isomorphic to $D_2$.
Thus $D_2$ is a member of $\mathsf{V}(S^0)$ as required.
\end{proof}

Let $u=u_1+\cdots+u_n$ be an ai-semiring term, where $u_i \in X^+$, $1\leq i \leq n$.
If $q \in X^+$,
then $D_q(u)$ denotes the set of all $u_i$ in $u$ such that $c(u_i) \subseteq c(q)$.
The following result explores the relationship between the equational theories of $S^0$ and $S$.
\begin{pro}\label{lemma11}
Let $S$ be an ai-semiring and $u\approx u+q$ an ai-semiring identity such that
$u=u_1+\cdots+u_n$, where $u_i, q\in X^+$, $1\leq i \leq n$.
Then $u\approx u+q$ holds in $S^0$ if and only if
$D_q(u)\neq \emptyset$ and
$D_q(u)\approx D_q(u)+q$
is satisfied by $S$.
\end{pro}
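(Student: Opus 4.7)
The plan is to exploit directly the role of the adjoined element $0$ in $S^0$: it is multiplicatively absorbing and additively neutral, so any substitution into $S^0$ that sends some variable of a word to $0$ kills that summand in every sum. This immediately suggests a forward direction based on a ``zero-out'' substitution, and a converse by case analysis on whether $\psi(q) = 0$.

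For the forward direction, I would first invoke Proposition~\ref{nnnpro} to get $D_2 \in \mathsf{V}(S^0)$, and then apply Lemma~\ref{nlemma1} to the identity $u \approx u + q$ in $D_2$ to obtain $D_q(u) \neq \emptyset$. To establish the $S$-identity, given any substitution $\varphi \colon X \to S$, I would extend it to $\varphi^{\ast} \colon X \to S^0$ by sending every variable outside $c(q)$ to $0$. Multiplicative absorption then forces $\varphi^{\ast}(u_i) = 0$ for every $u_i \notin D_q(u)$, while $\varphi^{\ast}(u_j) = \varphi(u_j) \in S$ for $u_j \in D_q(u)$ since all variables of such words lie in $c(q)$; likewise $\varphi^{\ast}(q) = \varphi(q) \in S$. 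Additive neutrality of $0$ collapses $\varphi^{\ast}(u)$ to $\varphi(D_q(u))$ (nonempty thanks to the preceding step), so the identity $\varphi^{\ast}(u) = \varphi^{\ast}(u) + \varphi^{\ast}(q)$ holding in $S^0$ restricts to the required identity in $S$.

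For the converse, assume $D_q(u) \neq \emptyset$ and that $D_q(u) \approx D_q(u) + q$ holds in $S$. Given any substitution $\psi \colon X \to S^0$, I would split on whether $\psi(x) = 0$ for some $x \in c(q)$. In that case $\psi(q) = 0$, and $\psi(u+q) = \psi(u)$ is trivial. Otherwise $\psi$ sends every variable of $c(q)$ into $S$, so $\psi(u_i) \in S$ for all $u_i \in D_q(u)$, and the restriction $\psi|_{c(q)} \colon c(q) \to S$ can be fed into the hypothesis to yield $\psi(D_q(u)) = \psi(D_q(u)) + \psi(q)$ inside $S \subseteq S^0$. Adding the remaining summands $\psi(u_i)$ for $u_i \notin D_q(u)$ to both sides gives $\psi(u) = \psi(u) + \psi(q)$.

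I do not expect a genuine obstacle: the whole argument is bookkeeping once one has chosen the right substitution. The only thing that genuinely requires attention is keeping track of when a sum of $S^0$-elements actually lies in $S$ (so the equational theory of $S$ can be invoked), which is fine because $S$ is a subsemiring of $S^0$ and addition in $S^0$ restricted to $S$ agrees with addition in $S$.
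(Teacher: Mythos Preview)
Your proposal is correct and follows essentially the same approach as the paper: the forward direction uses the identical zero-out substitution (sending variables outside $c(q)$ to $0$) after invoking $D_2 \in \mathsf{V}(S^0)$, and the converse uses the same case split on whether some variable of $q$ is sent to $0$. The only cosmetic difference is that in the second case the paper passes through an auxiliary set $D_Z(u)\supseteq D_q(u)$ of summands landing in $S$ before reassembling $\varphi(u)$, whereas you add the leftover summands directly to both sides of $\psi(D_q(u)) = \psi(D_q(u)) + \psi(q)$; this is not a genuinely different argument.
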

\begin{proof}
Assume that $u\approx u+q$ holds in $S^0$.
By Proposition \ref{nnnpro} we have that $\mathsf{V}(S^0)$ contains $D_2$.
It follows that $D_2$ also satisfies $u\approx u+q$ and so by Lemma \ref{nlemma1}
$c(u_i)\subseteq c(q)$ for some $u_i\in u$.
Thus $u_i \in D_{q}(u)$ and so $D_{q}(u)\neq \emptyset$.
Let $\varphi: X \to S$ be an arbitrary substitution.
Consider $\psi: X \to S^0$ given by $\psi(x)=0$ for all $x \notin c(q)$
and $\psi(x)=\varphi(x)$ for all $x \in c(q)$.
Then $\psi(u)=\psi(u+q)$, since $u\approx u+q$ holds in $S^0$.
If we write $u=u'+D_{q}(u)$ such that $u'\cap D_{q}(u)=\emptyset$,
then for every $u_j$ in $u'$, $c(u_j)\nsubseteq c(q)$.
This implies that $\psi(u_j)=0$ and so is $\psi(u')$.
We now have
\begin{align*}
\varphi(D_{q}(u))
              & =0+\psi(D_{q}(u))\\
              & =\psi(u')+\psi(D_{q}(u))\\
              & =\psi(u'+D_{q}(u))\\
              & =\psi(u)=\psi(u+q)\\
              & =\psi(u'+D_{q}(u)+q)\\
              & =\psi(u')+\psi(D_{q}(u)+q)\\
              & =0+\varphi(D_{q}(u)+q)\\
              & =\varphi(D_{q}(u)+q).
\end{align*}
Thus $D_{q}(u)\approx D_{q}(u)+q$
is satisfied by $S$.

Conversely, let $\varphi: X \to S^0$ be an arbitrary substitution
and $Z$ denote the set $\{x \mid x\in c(u)\cup c(q), \varphi(x)=0\}$.
If $Z \cap c(q)\neq \emptyset$, then $\varphi(q)=0$ and so
\[\varphi(u+q)=\varphi(u)+\varphi(q)=\varphi(u)+0=\varphi(u).\]
Otherwise, $Z \cap c(q)= \emptyset$. Then
$Z\subseteq c(u)\backslash c(q)$ and so
$D_{q}(u)\subseteq D_Z(u)$, where $D_Z(u)$ denotes the set of all $u_i$ in $u$ such that
$c(u_i) \cap Z=\emptyset$.
Notice that both $\varphi(D_Z(u))$ and $\varphi(D_Z(u)+q)$ are elements of $S$.
By assumption it follows that $\varphi(D_Z(u))=\varphi(D_Z(u)+q)$.
We now have
\begin{align*}
\varphi(u)
              & =\varphi(u+D_Z(u))\\
              & =\varphi(u)+\varphi(D_Z(u))\\
              & =\varphi(u)+\varphi(D_Z(u)+q)\\
              & =\varphi(u+D_Z(u)+q)\\
              & =\varphi(u+q).
\end{align*}
Thus $u\approx u+q$ holds in $S^0$.
\end{proof}

\begin{remark}
The similar results with Proposition \ref{lemma11} have been appeared in \cite[Proposition 69]{aei}
and \cite[Lemma 2.5]{gpz05}.
\end{remark}

\section{The semiring $S_7^0$}

In this section we shall show that the ai-semiring $S_7^0$ is nonfinitely based in the syntactic way.

Let $\omega$ be a word and $x$ a letter. Then
\begin{itemize}
\item $\ell(\omega)$ denotes the number of variables occurring in $\omega$ counting multiplicities.

\item $occ(x, \omega)$ denotes the number of occurrences of $x$ in $\omega$.
\end{itemize}
So $\ell(\omega)$ is equal to the sum of $occ(x, \omega)$, $x\in c(\omega)$.
$\omega$ is \emph{linear} if $occ(x, \omega)=1$ for all $x \in c(\omega)$.
Let $u$ be an ai-semiring term. Then
\begin{itemize}
\item $c(u)$ denotes the set of variables that occur in $u$, that is, the union of $c(\omega)$, $\omega \in u$.

\item $\delta(u)$ denotes the set of nonempty subsets $Z$ of $c(u)$ such that for every $\omega \in u$,
$Z\cap c(\omega)$ is a singleton and $occ(x, \omega)=1$ if $\{x\}=Z\cap c(\omega)$.
\end{itemize}
The following result, which provides the solution of the equational problem for $S_7$,
is due to Jackson et al. \cite[Proposition 5.5]{jrz}:
\begin{lem}\label{lemma21}
Let $u\approx v$ be an ai-semiring identity. Then $u\approx v$ holds in $S_7$
if and only if $c(u)=c(v)$ and $\delta(u)=\delta(v)$.
\end{lem}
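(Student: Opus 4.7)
The plan is to analyze both directions directly via substitutions $\varphi\from X\to S_7$. The key observation is that the addition of $S_7$ is very rigid: a sum of elements equals a common value $s\in\{1,a\}$ exactly when all summands equal $s$, and equals $0$ in every other case (since any two distinct elements sum to $0$ and $0$ is additively absorbing). Thus $\varphi(u)$ is determined once we know each individual $\varphi(\omega)$ for $\omega\in u$, which in turn is controlled by the partition of $X$ into $I=\varphi^{-1}(1)$, $A=\varphi^{-1}(a)$, and $Z=\varphi^{-1}(0)$.

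For sufficiency, I would first compute that $\varphi(\omega)=0$ whenever $c(\omega)\cap Z\neq\emptyset$ or the total multiplicity of $A$-letters in $\omega$ is at least two; otherwise $\varphi(\omega)=a$ if exactly one $A$-letter occurs in $\omega$ and occurs there just once, and $\varphi(\omega)=1$ if no $A$-letter appears. Combining this with the addition rule, I obtain the characterization: $\varphi(u)=1$ iff $c(u)\cap(A\cup Z)=\emptyset$; $\varphi(u)=a$ iff $c(u)\cap Z=\emptyset$ and $A\cap c(u)\in\delta(u)$; and $\varphi(u)=0$ in all remaining cases. Since each condition depends only on $c(u)$ and $\delta(u)$, the hypothesis $c(u)=c(v)$ and $\delta(u)=\delta(v)$ immediately yields $\varphi(u)=\varphi(v)$ for every substitution $\varphi$.

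For necessity, I would argue by contrapositive, exhibiting a falsifying assignment in each failure case. If $x\in c(u)\setminus c(v)$, the substitution $\varphi(x)=a$, $\varphi(y)=1$ for $y\neq x$ makes $\varphi(v)=1$ by the characterization above, whereas $\varphi(u)\in\{a,0\}$. If instead $Z_0\in\delta(u)\setminus\delta(v)$, the substitution $\varphi(z)=a$ for $z\in Z_0$ and $\varphi(y)=1$ otherwise gives $A\cap c(u)=Z_0\in\delta(u)$, hence $\varphi(u)=a$; but $Z_0\notin\delta(v)$ forces $\varphi(v)\neq a$. Either way the identity fails in $S_7$.

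The main obstacle is simply carrying out the case analysis cleanly and checking that the three conditions in the characterization of $\varphi(u)$ are mutually exclusive and exhaustive; the underlying algebra is elementary since each word evaluates into the three-element set $\{0,1,a\}$ and the sum rule is so restrictive. A small but necessary point in the first necessity argument is that even when every word of $u$ contains $x$ (so $\varphi(u)=a$ rather than $0$), we still have $\varphi(u)\neq 1=\varphi(v)$, so the falsification goes through.
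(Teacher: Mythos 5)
Your argument is correct and complete. Note that the paper itself gives no proof of this lemma: it is quoted verbatim from Jackson, Ren and Zhao \cite[Proposition 5.5]{jrz}, so there is nothing internal to compare against. Your direct computation --- classifying $\varphi(\omega)$ by the partition $I,A,Z$ and using the fact that a sum in $S_7$ equals $s\in\{1,a\}$ only when every summand equals $s$ --- yields exactly the right characterization: $\varphi(u)=1$ iff $c(u)\cap(A\cup Z)=\emptyset$, $\varphi(u)=a$ iff $c(u)\cap Z=\emptyset$ and $A\cap c(u)\in\delta(u)$, and $\varphi(u)=0$ otherwise; since these conditions are functions of $(c(u),\delta(u))$ alone, sufficiency follows, and your two falsifying substitutions for the converse are the natural ones and work as stated (including the observation that $\varphi(u)\neq 1$ even when every word of $u$ contains the distinguishing variable). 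This is a sound, self-contained verification of the cited result.
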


Next, we shall give the the solution of the equational problem for $S_7^0$.
\begin{pro}\label{nnpro2}
Let $u\approx u+q$ be an-semiring identity such that $u=u_1+\cdots+u_n$, where $q, u_i\in X^+$, $1\leq i\leq n$.
Then $u\approx u+q$ holds in $S_7^0$ if and only if $D_q(u)\neq \emptyset$,
$c(D_q(u))=c(q)$ and $\delta(D_q(u))=\delta(D_q(u)+q)$.
In particular, if $c(u)=c(q)$ and $\delta(u)=\emptyset$,
then $u\approx u+q$ is satisfied by $S^0_7$.
\end{pro}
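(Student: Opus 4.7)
The plan is to chain together two results already established in the paper: Proposition~\ref{lemma11}, specialized to $S=S_7$, and the syntactic description of the equational theory of $S_7$ provided by Lemma~\ref{lemma21}.

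First, I would invoke Proposition~\ref{lemma11} with $S=S_7$ to rewrite ``$u\approx u+q$ holds in $S_7^0$'' as the conjunction of $D_q(u)\neq\emptyset$ and ``$D_q(u)\approx D_q(u)+q$ holds in $S_7$''. Then I would apply Lemma~\ref{lemma21} to the latter identity: it holds in $S_7$ iff $c(D_q(u))=c(D_q(u)+q)$ and $\delta(D_q(u))=\delta(D_q(u)+q)$. By the definition of $D_q(u)$, every summand appearing in $D_q(u)$ has letter set contained in $c(q)$, so $c(D_q(u))\subseteq c(q)$ and consequently $c(D_q(u)+q)=c(D_q(u))\cup c(q)=c(q)$. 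Therefore the equality $c(D_q(u))=c(D_q(u)+q)$ reduces to $c(D_q(u))=c(q)$, which combined with the output of Proposition~\ref{lemma11} yields exactly the three-fold characterization in the statement.

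For the ``in particular'' clause, suppose $c(u)=c(q)$ and $\delta(u)=\emptyset$. Since every summand $u_i$ satisfies $c(u_i)\subseteq c(u)=c(q)$, the set $D_q(u)$ coincides with $u$; in particular $D_q(u)\neq\emptyset$ and $c(D_q(u))=c(u)=c(q)$. It remains to verify that $\delta(D_q(u)+q)=\delta(u+q)$ is empty. Any $Z\in\delta(u+q)$ is a nonempty subset of $c(u+q)=c(q)=c(u)$, and the defining conditions on $\delta$ must hold for every summand, in particular for each $\omega\in u$; this forces $Z\in\delta(u)=\emptyset$, a contradiction. Thus $\delta(D_q(u))=\emptyset=\delta(D_q(u)+q)$, and all three conditions are satisfied.

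The argument is a mechanical assembly of Proposition~\ref{lemma11} and Lemma~\ref{lemma21}, so no serious obstacle is anticipated. The only point that requires a moment's care is the final observation that augmenting the sum $u$ by the word $q$ cannot introduce new elements of $\delta$ (it can only shrink $\delta(u)$), which is what makes the ``in particular'' clause work despite $\delta$ being a delicate combinatorial invariant.
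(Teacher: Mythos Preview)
Your proposal is correct and follows exactly the route the paper takes: the paper's own proof consists of the single sentence ``This follows from Proposition~\ref{lemma11} and Lemma~\ref{lemma21} immediately,'' and you have simply spelled out the straightforward details of that combination, including the reduction $c(D_q(u)+q)=c(q)$ and the observation that adding $q$ can only shrink $\delta$.
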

\begin{proof}
This follows from Proposition \ref{lemma11} and Lemma \ref{lemma21} immediately.
\end{proof}

Let $\mathcal{V}$ be a variety. Then $\mathsf{Id}(\mathcal{V})$ denotes the set of all identities
over $X$ that hold in $\mathcal{V}$.
We now establish the main result of this section:
\begin{thm}\label{thm1}
Let $\mathcal{V}$ be an ai-semiring variety. If $\mathcal{V}$ is a subvariety of $\mathsf{V}(S_7^0)$ and
contains $S_7$ and $D_2$, then $\mathcal{V}$ is nonfinitely based.
\end{thm}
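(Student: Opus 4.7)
The plan is the standard syntactic approach to nonfinite basability: construct an infinite family of identities $\sigma_n$ ($n \geq 2$) that hold in $S_7^0$ (hence in $\mathcal{V}$ by the hypothesis $\mathcal{V} \subseteq \mathsf{V}(S_7^0)$), and then show that for every finite $\Sigma \subseteq \mathsf{Id}(\mathcal{V})$ there exists some $n$ such that $\sigma_n$ is not derivable from $\Sigma$ in the sense of Lemma \ref{lem1}. This contradicts the existence of a finite basis for $\mathcal{V}$.

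To produce the witnesses I would exploit the ``in particular'' clause of Proposition \ref{nnpro2}: any identity $u \approx u + q$ with $c(u) = c(q)$ and $\delta(u) = \emptyset$ automatically holds in $S_7^0$. For each $n \geq 2$ I would choose $\sigma_n : u_n \approx u_n + q_n$ of this shape in variables $x_1, \ldots, x_n$, in such a way that every variable occurs in each summand with multiplicity at least two; this repetition immediately kills $\delta(u_n)$ and $\delta(u_n + q_n)$, so validity in $S_7^0$ is automatic. The concrete choice I would try first mirrors the sequence used by Jackson et al.\ to witness the nonfinite basability of $S_7$ itself, suitably ``thickened'' so that every letter appears at least twice in each monomial and $c(u_n) = c(q_n)$ holds.

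Now assume, towards a contradiction, that $\mathcal{V}$ admits a finite basis $\Sigma \subseteq \mathsf{Id}(\mathcal{V})$. Let $N$ be any integer strictly exceeding $|c(A) \cup c(B)|$ for every $A \approx B \in \Sigma$, and fix $n > N$. By Lemma \ref{lem1} any derivation of $\sigma_n$ from $\Sigma$ decomposes as a sequence of single-step rewrites $T_i \to T_{i+1}$ replacing $P_i \varphi_i(A_i) Q_i$ by $P_i \varphi_i(B_i) Q_i$, with some $A_i \approx B_i$ (or its converse) in $\Sigma$. Because $\Sigma \subseteq \mathsf{Id}(S_7) \cap \mathsf{Id}(D_2)$, Lemmas \ref{lemma21} and \ref{nlemma1} jointly impose on each such identity the constraints $c(A_i) = c(B_i)$, $\delta(A_i) = \delta(B_i)$, and the $D_2$-covering condition that each monomial on either side be contained, in variables, in some monomial on the other side.

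The final and most delicate step is to define a syntactic invariant $\Phi$ on $P_f(X^+)$ that is preserved by every rewrite step using an identity of $\Sigma$ yet distinguishes $u_n$ from $u_n + q_n$. Heuristically, creating the new monomial $q_n$ in one rewrite step forces the substitution $\varphi_i$ to place all $n$ variables of $c(q_n)$ inside the image of a single variable of $B_i$; then the $D_2$-covering forces the paired monomial of $A_i$ to spread across at least as many variables, and the $c$-/$\delta$-equality constraints inflate $|c(A_i) \cup c(B_i)|$ to at least $n > N$, yielding the contradiction. The main obstacle is making this heuristic precise: the witness sequence $\sigma_n$ and the invariant $\Phi$ must be engineered together so that $\Phi$ is a genuine numerical function of $(c, \delta, \ell, \operatorname{occ})$-data, monotonic or rigid under $\Sigma$-rewrites, and provably separating on the pair $(u_n, u_n + q_n)$. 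Calibrating this co-design is the technical heart of the argument.
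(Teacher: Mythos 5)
Your overall skeleton (an infinite family of identities validated in $S_7^0$ via the ``in particular'' clause of Proposition \ref{nnpro2}, plus a proof that no bounded-variable fragment of $\mathsf{Id}(\mathcal{V})$ derives them) matches the paper's strategy, but the proposal has a genuine gap at its core: the separating invariant $\Phi$ and the exact witness sequence are never produced, and you yourself flag that ``calibrating this co-design is the technical heart of the argument.'' That heart is missing. Moreover, the one concrete design decision you do commit to --- killing $\delta(u_n)$ by forcing every letter to occur at least twice in each monomial --- points in a direction where the argument would likely fail. The paper instead takes $u^{(n)}=x_1x_2+x_2x_3+\cdots+x_{2n}x_{2n+1}+x_{2n+1}x_1$ (an odd cycle of linear, length-two monomials) and $q^{(n)}=x_1x_2\cdots x_{2n+1}$; there $\delta(u^{(n)})=\emptyset$ comes from non-bipartiteness, not from repeated letters. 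The whole non-derivability argument rests on a local/global contrast: any $A$ in at most $n$ variables whose substitution image sits inside $u^{(n)}$ must consist of linear words of length at most two forming a graph with no odd cycle, hence bipartite, hence $\delta(A)\neq\emptyset$ with every variable of $A$ covered by some member of $\delta(A)$; combining this with the $S_7$- and $D_2$-conditions on $A\approx B$ forces $B\subseteq A$, so each one-step rewrite can only delete monomials and the long word $q^{(n)}$ can never be created. No numerical invariant is needed.

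If instead every monomial of $u_n$ has all its letters repeated, the small identities $A\approx B$ applicable to $u_n$ can themselves satisfy $\delta(A)=\delta(B)=\emptyset$, and Lemma \ref{lemma21} then constrains them only by $c(A)=c(B)$; together with the weak $D_2$ covering condition this leaves far too much freedom (note, for instance, that $x^2\approx x^3$ is a one-variable identity of $\mathcal{V}$, so thickened monomials can be rewritten quite freely). Your heuristic that creating $q_n$ in a single step forces all $n$ variables into the image of one variable of $B_i$ is also not the mechanism of the obstruction: the point is that no new monomial of any kind can appear at any step of the derivation, not that creating $q_n$ specifically is expensive. To repair the proposal you would need to drop the thickening idea, adopt the odd-cycle witnesses, and replace the undefined invariant $\Phi$ by the bipartiteness argument just described.
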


\begin{proof}
The strategy of the proof is as follows.
We shall show that for every $n \geq 1$, the set $\Sigma_n$
of all $n$-variable identities in $\mathsf{Id}(\mathcal{V})$ does not form a basis for $\mathsf{Id}(\mathcal{V})$.
For this it is enough to prove that for every $n \geq 1$, there exists
an identity that is true in $\mathcal{V}$ such that it can not be derived by $\Sigma_n$.
Notice that $S^0_7$ satisfies the identity $xy\approx yx$.
We may assume that every ai-semiring term in the sequel is a finite nonempty set of words in a free commutative semigroup over $X$.

Let $n$ be an arbitrary positive integer,
and let $u^{(n)} \approx u^{(n)}+q^{(n)}$ be an ai-semiring identity such that
\[
u^{(n)}=x_1x_2+x_2x_3+\cdots+x_{2n}x_{2n+1}+x_{2n+1}x_1
\]
and
\[q^{(n)}=x_1x_2\cdots x_{2n+1}.
\]
It is easy to verify that $c(u^{(n)})=c(q^{(n)})$ and $\delta(u^{(n)})=\emptyset$.
By Proposition \ref{nnpro2} we have that the identity
$u^{(n)}\approx u^{(n)}+ q^{(n)}$ is satisfied by $S_7^0$ and so does $\mathcal{V}$.
Suppose by way of contradiction that $u^{(n)}\approx u^{(n)}+ q^{(n)}$ can be derived by $\Sigma_n$.
Then by Lemma \ref{lem1} there exists an ai-semiring identity $A \approx B$ in $\Sigma_n$
and an ai-semiring substitution $\varphi$ such that
$\varphi(A)$ is a subterm of $u^{(n)}$ or $\varphi(B)$ is a subterm of $u^{(n)}$.
Without loss of generality, assume that $\varphi(A)$ is a subterm of $u^{(n)}$.
Then $P\varphi(A)Q+R = u^{(n)}$ for some ai-semiring terms $P$, $Q$ and $R$,
where $P_i$ and $Q_i$ may be the set $\{1\}$, $R_i$ may be the empty set.
It is easy to see that $A$ satisfies the following conditions:
\begin{itemize}
\item[$(a)$] $\ell(\omega)\leq 2$ for all $\omega \in A$;

\item[$(b)$] $\omega$ is linear for all $\omega \in A$;

\item[$(c)$] $\omega_1 \leq \omega_2$ implies that $\omega_1=\omega_2$ for all $\omega_1, \omega_2 \in A$;

\item[$(d)$] $A$ does not contain any ai-semiring term of the form $x_1x_2+x_2x_3+\cdots+x_{2m}x_{2m+1}+x_{2m+1}x_1$ for any $m\geq 1$.
\end{itemize}
Let $A'$ denote the sum of all words in $A$ with length $2$.
If $A' = \emptyset$, then by Lemma \ref{lemma21} $A \approx B$ is trivial, a contradiction.
If $A' \neq \emptyset$,
then it may be thought of as a graph
whose vertex set is $c(A')$ and edge set consists of $\{x, y\}$ if $xy$ lies in $A'$.
Recall that a graph is \emph{bipartite} if its vertex set can be decomposed into two disjoint sets such
that no two vertices within the same set are adjacent.
By \cite[Theorem 4]{bol98} we know that a graph is bipartite if and only if it does not contains an odd cycle.
From these observations one can deduce that $\delta(A)\neq \emptyset$
and that every variable in $c(A)$ lies in some set in $\delta(A)$.

Let $p$ be an arbitrary word in $B$. Then the identity $A \approx A+p$ is satisfied by $\mathcal{V}$
and so does hold in $S_7$ and $D_2$. It follows from Lemmas \ref{nlemma1} and \ref{lemma21}
that $c(p) \subseteq c(A)$, $\delta(A) = \delta(A+p)$ and
$c(\omega)\subseteq c(p)$ for some $\omega \in A$.
This implies that $p$ contains $\omega$.
Assume that $\omega$ is a proper subword of $p$. Then $p=\omega\omega'$ for some $\omega' \in X^+$.
Furthermore, every variable in $\omega'$ does not not lie in any set in $\delta(A)$,
a contradiction. We conclude that $p=\omega$.
Thus $B \subseteq A$ and so $\varphi(B) \subseteq \varphi(A)$.
Since $P\varphi(A)Q+R = u^{(n)}$,
we have that $P\varphi(B)Q+R \subseteq u^{(n)}$, a contradiction.
Hence by Lemma \ref{lem1} $u^{(n)}\approx u^{(n)}+ q^{(n)}$ can not be derived by $\Sigma_n$
and so $\mathcal{V}$ is nonfinitely based.
\end{proof}

By Theorem \ref{thm1} we immediately have the following two corollaries:
\begin{cor}
The ai-semiring $S_7^0$ is nonfinitely based.
\end{cor}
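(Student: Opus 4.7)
The plan is to deduce this corollary directly from Theorem \ref{thm1} by taking $\mathcal{V} = \mathsf{V}(S_7^0)$ and verifying that this variety satisfies the three hypotheses of the theorem: it is a subvariety of itself (trivially), it contains $S_7$, and it contains $D_2$.

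First I would verify that $S_7 \in \mathsf{V}(S_7^0)$. Inspecting the Cayley tables of $S_7^0$, the subset $\{1, a, 0\}$ is closed under both $+$ and $\cdot$: every sum and product of elements from $\{1,a,0\}$ listed in the $S_7^0$ tables again lies in $\{1,a,0\}$. Hence $S_7$ embeds into $S_7^0$ as a subalgebra, and in particular $S_7 \in \mathsf{V}(S_7^0)$. Next, to obtain $D_2 \in \mathsf{V}(S_7^0)$, I would simply invoke Proposition \ref{nnnpro}, which asserts that $D_2$ lies in $\mathsf{V}(S^0)$ for any ai-semiring $S$; applying this with $S = S_7$ gives $D_2 \in \mathsf{V}(S_7^0)$.

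With both containments in hand, Theorem \ref{thm1} applies to $\mathcal{V} = \mathsf{V}(S_7^0)$ and yields that this variety is nonfinitely based. Since a finite algebra is nonfinitely based precisely when the variety it generates is nonfinitely based, it follows that $S_7^0$ itself is nonfinitely based. There is no real obstacle here: the entire proof is a routine specialization of Theorem \ref{thm1}, and the only nontrivial content is the observation that $S_7$ sits inside $S_7^0$ as a subalgebra, which is immediate from the Cayley tables.
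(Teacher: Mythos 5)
Your proposal is correct and follows exactly the route the paper intends: the paper derives this corollary ``immediately'' from Theorem \ref{thm1} by taking $\mathcal{V}=\mathsf{V}(S_7^0)$, with $S_7$ a subalgebra of $S_7^0$ and $D_2\in\mathsf{V}(S_7^0)$ by Proposition \ref{nnnpro}. Your explicit verification of the closure of $\{1,a,0\}$ under both operations just spells out what the paper leaves to the reader.
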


\begin{cor}
The join of $\mathsf{V}(S_7)$ and $\mathsf{V}(D_2)$ is nonfinitely based.
\end{cor}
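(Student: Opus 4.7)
The plan is to apply the classical $n$-variable basis argument: for every positive integer $n$, I would exhibit an identity valid in $\mathcal{V}$ that cannot be deduced from the set $\Sigma_n$ of all identities in $\mathsf{Id}(\mathcal{V})$ involving at most $n$ variables. Any finite basis must live inside some $\Sigma_n$, so this forces $\mathcal{V}$ to be nonfinitely based. Because $S_7^0$ satisfies $xy\approx yx$, I would work modulo commutativity throughout, treating ai-semiring terms as finite sets of commutative words.

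The natural candidate is the odd-cycle family
\[
u^{(n)}\approx u^{(n)}+q^{(n)},\quad u^{(n)}=x_1x_2+x_2x_3+\cdots+x_{2n}x_{2n+1}+x_{2n+1}x_1,\quad q^{(n)}=x_1x_2\cdots x_{2n+1}.
\]
The point is that $u^{(n)}$ encodes the $(2n{+}1)$-cycle graph, whose non-bipartiteness translates to $\delta(u^{(n)})=\emptyset$; together with $c(u^{(n)})=c(q^{(n)})$ this makes the identity hold in $S_7^0$, and hence in $\mathcal{V}$, by Proposition \ref{nnpro2}.

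For non-derivability from $\Sigma_n$, I would assume otherwise and isolate a single step of Lemma \ref{lem1} that actually affects $u^{(n)}$: an identity $A\approx B$ in $\mathsf{Id}(\mathcal V)$ with at most $n$ variables is applied via a substitution $\varphi$ so that $\varphi(A)$ sits as a subterm of $u^{(n)}$ (by symmetry, WLOG $A$ rather than $B$). Since every word of $u^{(n)}$ has length $2$ and is linear, each word of $A$ has length at most $2$, is linear, and is incomparable with the other words of $A$. The crucial structural observation is that the length-$2$ part of $A$ defines a graph $G_A$ on $c(A)$ whose edges are sent by $\varphi$ to edges of the $(2n{+}1)$-cycle. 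Since $|c(A)|\le n$ while the only odd cycle inside $u^{(n)}$ has length $2n+1>n$, a closed odd walk in $G_A$ would lift to a closed odd walk of length $\le n$ in the $(2n{+}1)$-cycle, which is impossible; hence $G_A$ is bipartite, and either side of a bipartition, augmented by the isolated variables of $A$, produces a nonempty member of $\delta(A)$ that covers every variable of $A$.

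To finish, I would exploit that $A\approx B$ holds in both $S_7$ and $D_2$. Applying Lemmas \ref{lemma21} and \ref{nlemma1} to $A\approx A+p$ for each $p\in B$ yields $c(p)\subseteq c(A)$, some $\omega\in A$ with $c(\omega)\subseteq c(p)$, and $\delta(A)=\delta(A\cup\{p\})$. The $\delta$-equality forbids $p$ from carrying any variable beyond $c(\omega)$ (else that extra variable would escape every existing $\delta$-set), and linearity together with the length bound then force $p=\omega$. Hence $B\subseteq A$, so $\varphi(B)\subseteq\varphi(A)\subseteq u^{(n)}$: the step never introduces $q^{(n)}$, contradicting derivability. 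The main obstacle I expect is the bipartiteness step, i.e.\ turning the cardinality gap $|c(A)|\le n<2n+1$ into the statement that $G_A$ has no odd cycle, and then converting a bipartition of $G_A$ into an element of $\delta(A)$ robust enough to pin every word of $B$ into $A$; the remainder is bookkeeping around Lemmas \ref{lem1}, \ref{lemma21}, \ref{nlemma1} and Proposition \ref{nnpro2}.
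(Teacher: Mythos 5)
Your proposal is correct and takes essentially the same route as the paper: there the corollary is an immediate consequence of Theorem \ref{thm1}, whose proof is exactly the argument you describe (the odd-cycle identities $u^{(n)}\approx u^{(n)}+q^{(n)}$, validity via Proposition \ref{nnpro2}, and non-derivability from $\Sigma_n$ via conditions forcing the graph of $A$ to be bipartite and then $B\subseteq A$ using Lemmas \ref{nlemma1} and \ref{lemma21}). You have simply inlined the proof of Theorem \ref{thm1}, specialized to $\mathcal{V}=\mathsf{V}(S_7)\vee\mathsf{V}(D_2)$.
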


\section{Conclusion}
We have shown that $S_7^0$ is nonfinitely based. This answers an open problem
proposed by Jackson et al. \cite{rjzl}. By contrast, we know very little information
about the cardinality of the interval $[\mathsf{V}(S_7), \mathsf{V}(S_7^0)]$.
One can prove that it has a lower bound $3$. More precisely,
this interval contains the join of $\mathsf{V}(S_7)$ and $\mathsf{V}(D_2)$,
which is distinct from $\mathsf{V}(S_7)$ and $\mathsf{V}(S_7^0)$.
This follows from the fact that the identity
\[
x^2+y \approx x^2y^2
\]
is satisfied by
$S_7$, but does not hold in $D_2$; the identity
\[
x^2+y \approx x^2+y+y^2
\]
is satisfied by
$S_7$ and $D_2$, but does not hold in $S_7^0$.
Let $\mathcal{V}$ be a variety that properly contains $\mathsf{V}(S_7)$ and is a subvariety of $\mathsf{V}(S_7^0)$.
We do not know whether $\mathcal{V}$ must contain $D_2$.
If the answer is positive, then every variety in the interval $[\mathsf{V}(S_7), \mathsf{V}(S_7^0)]$
is nonfinitely based, although the cardinality of this interval is not known.

\bibliographystyle{amsplain}


\end{document}